\documentclass[a4paper,11pt]{article}
\usepackage[latin1]{inputenc}
\usepackage[english]{babel}
\usepackage{amsmath}
\usepackage{amsfonts}
\usepackage{amssymb}
\usepackage{epsfig}
\usepackage{amsopn}
\usepackage{amsthm}
\usepackage{color}
\usepackage{graphicx}
\usepackage{enumerate}
\parindent=4pt
\parskip=2pt
\addtolength{\hoffset}{-1cm} \addtolength{\textwidth}{2.4cm}
\addtolength{\voffset}{-1cm} \addtolength{\textheight}{2cm}
\newtheorem{theorem}{Theorem}[section]
\newtheorem{corollary}[theorem]{Corollary}
\newtheorem{lemma}[theorem]{Lemma}

\newtheorem{definition}[theorem]{Definition}

\newtheorem*{theorem*}{Theorem}
\newtheorem*{lemma*}{Lemma}
\newtheorem*{remark*}{Remark}
\newtheorem*{definition*}{Definition}
\newtheorem*{proposition*}{Proposition}
\newtheorem*{corollary*}{Corollary}
\numberwithin{equation}{section}
%

\newcommand{\real}{\mathbb{R}}



\let\ced=\c         







\def\qed{\,\unskip\kern 6pt \penalty 500
\raise -2pt\hbox{\vrule \vbox to8pt{\hrule width 6pt
\vfill\hrule}\vrule}\par}
\definecolor{darkblue}{rgb}{0.05, .05, .65}
\definecolor{darkgreen}{rgb}{0.1, .65, .1}
\definecolor{darkred}{rgb}{0.8,0,0}
\newcommand{\beqn}{\begin{equation}}
\newcommand{\eeqn}{\end{equation}}
\newcommand{\bear}{\begin{eqnarray}}
\newcommand{\eear}{\end{eqnarray}}
\newcommand{\bean}{\begin{eqnarray*}}
\newcommand{\eean}{\end{eqnarray*}}
%


\begin{document}

\title{\huge \bf Extinction for a singular diffusion equation with strong gradient absorption revisited}

\author{
\Large Razvan Gabriel Iagar\,\footnote{Instituto de Ciencias
Matem\'aticas (ICMAT), Nicol\'as Cabrera 13-15, Campus de
Cantoblanco, 28049, Madrid, Spain, \textit{e-mail:}
razvan.iagar@icmat.es},\footnote{Institute of Mathematics of the
Romanian Academy, P.O. Box 1-764, RO-014700, Bucharest, Romania.}
\\[4pt] \Large Philippe Lauren\ced{c}ot\,\footnote{Institut de
Math\'ematiques de Toulouse, CNRS UMR~5219, Universit\'e de
Toulouse, F--31062 Toulouse Cedex 9, France. \textit{e-mail:}
Philippe.Laurencot@math.univ-toulouse.fr}\\ [4pt] }
\date{\today}
\maketitle

\begin{abstract}
When $2N/(N+1)<p<2$ and $0<q<p/2$, non-negative solutions to the singular diffusion equation with gradient absorption 
$$
\partial_tu-\Delta_p u + |\nabla u|^q=0 \ \text{ in }\ (0,\infty)\times\mathbb{R}^N
$$ 
vanish after a finite time. This phenomenon is usually referred to as finite time extinction and takes place provided the initial condition $u_0$ decays sufficiently rapidly as $|x|\to\infty$. On the one hand, the optimal decay of $u_0$ at infinity guaranteeing the occurence of finite time extinction is identified. On the other hand, assuming further that $p-1<q<p/2$, optimal extinction rates near the extinction time are derived.
\end{abstract}


\vspace{2.0 cm}

\noindent {\bf AMS Subject Classification:} 35K67, 35K92, 35B40.

\medskip

\noindent {\bf Keywords:}  Extinction, optimal rates, $p$-Laplacian equation, gradient absorption, strong absorption.

\section{Introduction}

We study some properties related to the phenomenon of finite time extinction of non-negative solutions to the initial value problem in $\real^N$ for the singular diffusion equation with gradient absorption
\begin{eqnarray}
  \partial_tu-\Delta_pu+|\nabla u|^q=0, && (t,x)\in(0,\infty)\times\real^N,\label{eq1} \\
  u(0)=u_0, && x\in\real^N,\label{IC}
\end{eqnarray}
when the exponents $p$ and $q$ satisfy
\begin{equation}\label{exp}
p_c:=\frac{2N}{N+1}<p<2, \qquad 0<q<\frac{p}{2},
\end{equation}
the $p$-Laplacian operator being given as usual by
$$
\Delta_p u(t,x)=\mathrm{div}(|\nabla u|^{p-2}\nabla u)(t,x), \qquad (t,x)\in(0,\infty)\times\real^N.
$$
We also assume throughout the paper that the initial condition $u_0$ enjoys the following properties:
\begin{equation}\label{regIC}
u_0\in L^1(\real^N)\cap W^{1,\infty}(\real^N), \qquad u_0(x)\ge 0, \ x\in\real^N, \qquad u_0\not\equiv0.
\end{equation}
According to the analysis performed in \cite[Section~6]{IL12}, the Cauchy problem \eqref{eq1}-\eqref{IC}, with initial condition satisfying \eqref{regIC}, has a unique non-negative (viscosity) solution $u$, the notion of viscosity solutions being the one developed in \cite{OS} to handle the singularity of the diffusion, see \cite[Definition~6.1]{IL12}. It is also a weak solution by \cite[Theorem~6.2]{IL12}. Moreover, in the range of exponents \eqref{exp}, the phenomenon of extinction of the solution $u$ in finite time occurs according to \cite[Theorem~1.2(iii)]{IL12} provided that the initial condition $u_0$ decays sufficiently fast as $|x|\to\infty$. More precisely, it is shown that, if
\begin{equation}\label{tail.fast}
u_0(x)\leq C_0|x|^{-(p-Q)/(Q-p+1)}, \qquad x\in\real^N,
\end{equation}
for some $C_0>0$ and suitable $Q>0$ (which is equal to $q$ if $q>q_1:=\max\{p-1,N/(N+1)\}$ and is arbitrary in $(q_1,p/2)$ otherwise), then
\begin{equation}
T_e := \sup\{ t\ge 0\ :\ u(t)\not\equiv 0\} \label{PhTe}
\end{equation}
is finite and positive. More recent works such as \cite{IL17, ILS17} go further in characterizing how the finite time extinction takes place, showing that (under suitable conditions on $u_0$) an even more striking phenomenon, the \emph{instantaneous shrinking of the support} takes place for $q\in (0,p-1)$. More precisely, the positivity set $\mathcal{P}(t)$ of $u$ at time $t$ defined by
\begin{equation}
\mathcal{P}(t)=\{x\in\real^N: u(t,x)>0\} \label{PhPosSet}
\end{equation}
is compact (and localized uniformly in $t$) for any $t\in(0,T_e)$, even if $u_0(x)>0$ for any $x\in\real^N$. In \cite{IL17}, Eq. \eqref{eq1} with critical exponent $q=p-1$ is studied thoroughly and both optimal extinction rates and precise extinction profiles (in separate variable form) are given, provided that the initial condition $u_0$ is radially symmetric, radially non-increasing in $|x|$, and has an exponential spatial tail as $|x|\to\infty$. As a by-product, it is also shown that \emph{simultaneous extinction} occurs both for $q=p-1$ and for $q\in(p-1,p/2)$, that is,
$$
u(t,x)>0, \qquad {\rm for \ any } \ (t,x)\in(0,T_e)\times\real^N.
$$
However, it was noticed already in \cite[Theorem~1.1 and Theorem~1.2]{ILS17} for the range $0<q<p-1$ that the previous tail \eqref{tail.fast} is not optimal for finite time extinction to take place and our first main result is devoted to the identification of the optimal decay of $u_0$ as $|x|\to\infty$ guaranteeing the occurrence of this phenomenon.

\begin{theorem}[Optimal tail for extinction]\label{th.tail}
Let $u$ be a solution to the Cauchy problem \eqref{eq1}-\eqref{IC} with exponents satisfying \eqref{exp} and an initial condition $u_0$ satisfying \eqref{regIC}.

\begin{itemize}
\item[(a)] Assume further that
\begin{equation}\label{tail.optimal}
u_0(x)\leq C_0(1+|x|)^{-q/(1-q)}, \qquad x\in\real^N,
\end{equation}
for some $C_0>0$. Then the extinction time $T_e$ of $u$ defined in \eqref{PhTe} is positive and finite.

\item[(b)] If
\begin{equation}\label{tail.slow}
\lim\limits_{|x|\to\infty}|x|^{q/(1-q)}u_0(x)=\infty, \qquad u_0(x)>0 \ \ \text{ for  any} \ x\in\real^N,
\end{equation}
then $T_e=\infty$ and $\mathcal{P}(t)=\real^N$ for any $t>0$.
\end{itemize}
\end{theorem}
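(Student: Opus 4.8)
The natural strategy is to exhibit explicit supersolutions and subsolutions with the borderline spatial decay $|x|^{-q/(1-q)}$ and invoke the comparison principle for viscosity (equivalently weak) solutions of \eqref{eq1}. For part (a), I would look for a \emph{separate-variables supersolution} of the form $W(t,x)=\Theta(t)\,g(|x|)$, or more flexibly $W(t,x)=(T-t)_+^{\alpha}\,G(|x|/(T-t)^{\beta})$, designed so that the gradient-absorption term $|\nabla W|^q$ dominates and forces $W$ to vanish at a finite time $T$. The key heuristic is that, neglecting diffusion, the ODE $\partial_t u+|\nabla u|^q=0$ with a profile $u\sim |x|^{-q/(1-q)}$ gives $|\nabla u|^q\sim |x|^{-q(1/(1-q)+1)}=|x|^{-q/(1-q)}$, i.e.\ the absorption rate is comparable to $u$ itself at every scale, which is exactly the balance that produces finite-time extinction at the threshold tail. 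So I would try the ansatz $W(t,x)=A\,(\tau-t)_+\,\big(1+|x|\big)^{-q/(1-q)}$ or, better, $W(t,x)=A\,\big[(\tau-t)_+ + \delta(1+|x|)^{-1/(1-q)}\big]^{q}$-type expressions, compute $\partial_t W$, $\Delta_p W$, and $|\nabla W|^q$, and check that for suitable constants $A,\tau,\delta$ (with $\tau$ depending on $C_0$) one has $\partial_t W-\Delta_p W+|\nabla W|^q\ge 0$ and $W(0,\cdot)\ge u_0$. One then needs the smallness/subquadraticity condition $q<p/2$ precisely to ensure that the diffusion term $\Delta_p W$, which scales with a higher negative power of $(1+|x|)$, can be absorbed into the (dominant) absorption term uniformly in $|x|$; the restriction $p>p_c$ guarantees the diffusion operator is well behaved and that $L^1$ data give bounded solutions so comparison applies. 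The main obstacle in (a) is getting the algebra of exponents to close: one must verify that the exponent produced by $|\nabla W|^{p-2}$ hitting $W$ in $\Delta_p W$ is strictly smaller (more negative) than the exponent of the absorption term, which should hold exactly when $q<p/2$; handling the region near $x=0$ (where the profile is smooth and bounded, so diffusion is harmless) separately from the far field is a routine but necessary case split.

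For part (b), the goal is to construct a positive \emph{subsolution} that never vanishes, showing $T_e=\infty$, and then to upgrade pointwise positivity to $\mathcal P(t)=\real^N$. The decay hypothesis \eqref{tail.slow} says $u_0$ decays strictly slower than the threshold at infinity, so one expects the absorption term to be too weak to kill the solution. I would build a stationary or slowly-decaying subsolution $V(x)=\varepsilon(1+|x|)^{-\gamma}$ with $\gamma$ slightly smaller than $q/(1-q)$ chosen so that $V\le u_0$ (possible because of \eqref{tail.slow}, after shrinking $\varepsilon$), and check $-\Delta_p V+|\nabla V|^q\le 0$; with $\gamma<q/(1-q)$ the absorption exponent $\gamma+1$ times $q$, namely $q(\gamma+1)$, exceeds $\gamma$, so $|\nabla V|^q=o(V)$ as $|x|\to\infty$ and is beaten by the (sign-indefinite but, for this decreasing radial profile, eventually the right sign) diffusion term — here one must be slightly careful because $\Delta_p$ of a radial decreasing profile need not be nonnegative, so it may be cleaner to use a self-similar expanding subsolution of the pure $p$-Laplacian (a Barenblatt-type profile with fat tail) combined with the fact that $|\nabla u|^q$ is a subcritical perturbation, rather than a stationary one. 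Once $V>0$ is a subsolution below $u_0$, comparison gives $u(t,x)\ge V(x)>0$ for all $t,x$, hence $T_e=\infty$ and $\mathcal P(t)=\real^N$. The hard part of (b) is verifying the differential inequality for the subsolution uniformly on $\real^N$, again a matter of matching exponents, together with justifying comparison down to $t=0$ with merely $W^{1,\infty}$ data; both are addressed by the well-posedness and comparison theory quoted from \cite{IL12, OS} and by an approximation argument regularizing $u_0$ if needed.

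In both parts, the logical skeleton is identical: (i) identify the correct threshold profile $(1+|x|)^{-q/(1-q)}$ from the scaling balance between $\partial_t u$ and $|\nabla u|^q$; (ii) perturb it into a genuine super- or subsolution by inserting a time factor (for (a), a linearly decaying factor $(\tau-t)_+$ that enforces extinction at $t=\tau$; for (b), a trivial or expanding-in-time factor); (iii) arrange the free constants so the comparison function sandwiches $u_0$ at $t=0$; (iv) apply the comparison principle. I would expect step (ii)–(iii) for part (a) — producing a supersolution that both dominates $u_0$ and genuinely collapses in finite time while absorbing the diffusion term — to be the single most delicate computation, and I would organize the paper so that the needed exponent inequalities ($q(\gamma+1)>\gamma$ for the relevant $\gamma$, and the analogous bound ensuring $\Delta_p$ is lower order) are isolated as short lemmas before the main construction.
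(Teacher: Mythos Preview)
Your overall strategy---build a self-similar supersolution with the threshold tail for (a), a subsolution that stays positive for (b), and invoke comparison---is exactly the paper's approach. Two points, however, deserve correction.

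\medskip

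\textbf{Part (a): the viscosity condition at $x=0$ is not routine.} The paper uses precisely your ``more flexible'' ansatz $W(t,x)=(T-t)^{\alpha}f(|x|(T-t)^{\beta})$ with $f(y)=(a+by^{\theta})^{-\gamma}$, $\theta=p/(p-1)$, $\gamma=(p-1)q/[p(1-q)]$; the simpler separate-variables form $A(\tau-t)(1+|x|)^{-q/(1-q)}$ you first propose does \emph{not} work, because near $t=\tau$ the term $\partial_t W$ is of order $-A$ while $|\nabla W|^q$ is of order $(\tau-t)^q\to 0$, so the inequality fails. More importantly, you describe the region near $x=0$ as a ``routine'' case split where ``diffusion is harmless.'' This misses the real obstacle. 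At $x=0$ one has $\nabla W(t,0)=0$ and $\partial_t W(t,0)=-\alpha(T-t)^{\alpha-1}a^{-\gamma}<0$; by Definition~\ref{def.visc}, a viscosity supersolution must satisfy $\partial_t\psi(t_0,0)\ge 0$ whenever a test function $\psi\in\mathcal{A}$ touches from below at a point with vanishing gradient. The resolution (the paper's Lemma~\ref{lem.sing}) is that \emph{no} admissible $\psi$ can touch $W$ from below at $(t_0,0)$: near the origin $W(t_0,0)-W(t_0,x)\sim c|x|^{p/(p-1)}$, whereas functions in the admissible class $\mathcal{A}$ are required to be $o(|x|^{p/(p-1)})$ at their critical points (this is built into the class $\mathcal{F}_p$ via \eqref{PhF1}--\eqref{PhF2}). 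The specific exponent $\theta=p/(p-1)$ in the profile is what makes this argument go through, and this is the genuinely delicate step you should isolate.

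\medskip

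\textbf{Part (b): the subsolution is time-dependent.} Your instinct that a stationary subsolution is problematic is correct, and the paper does not attempt one. Instead (Lemma~\ref{lem.sub}, taken from \cite{ILS17}) it uses $w(t,x)=(T-t)^{1/(1-q)}(a+b|x|^{\theta})^{-\gamma}$ with the \emph{same} threshold profile as in (a) but a time factor that \emph{decreases}; since $T>0$ is arbitrary and $a$ can be taken large (so that $w(0,\cdot)\le u_0$ thanks to \eqref{tail.slow}), comparison on each $(0,T)$ yields $u(t,x)>0$ for all $t>0$ and $x\in\real^N$, hence $T_e=\infty$.
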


An obvious consequence of Theorem~\ref{th.tail} is the optimality of the tail behavior \eqref{tail.optimal} for finite time extinction to occur. Furthermore, it strictly improves \cite[Theorem~1.2(iii)]{IL12}. Indeed, since $p>q+Q$, the exponent $Q$ being introduced in \eqref{tail.fast}, it follows that $(p-Q)(1-q)>q(Q-p+1)$ or equivalently
$$
\frac{p-Q}{Q-p+1}>\frac{q}{1-q}.
$$
Consequently, the decay assumed in \eqref{tail.fast} is strictly faster than the optimal one \eqref{tail.optimal}. Let us also remark that we state Theorem~\ref{th.tail} here for exponents $p$ and $q$ satisfying \eqref{exp}, but for the range of exponents $0<q<p-1$, it is already proved in \cite[Theorems~1.2 and~1.3]{ILS17}.

Once it is known that finite time extinction takes place, a further important step in understanding the extinction mechanism is to identify the behavior of the solution $u$ to the Cauchy problem \eqref{eq1}-\eqref{IC} as $t\to T_e$, where $T_e$ is the extinction time defined in \eqref{PhTe}. To this end, a first point is to determine the extinction rate, that is, the precise (optimal) space and time scales in which $u(t)$ vanishes as $t\to T_e$. This is the second main result of the present note. Before stating it, let us introduce the exponents
\begin{equation}\label{exp.alpha}
\alpha:=\frac{p-q}{p-2q}, \qquad \beta := \frac{q-p+1}{p-2q},
\end{equation}
which will be used throughout the paper.

\begin{theorem}[Optimal extinction rate]\label{th.rate}
Assume that $p\in (p_c,2)$ and $p-1<q<p/2$. Let $u$ be the solution to the Cauchy problem \eqref{eq1}-\eqref{IC} with an initial condition $u_0$ satisfying \eqref{regIC} as well as the decay property
\begin{equation}\label{tail.fast1}
0\leq u_0(x)\leq K_0|x|^{-(p-q)/(q-p+1)}, \qquad x\in\real^N,
\end{equation}
for some $K_0>0$. Then there exist two positive constants $c_\infty$ and $C_\infty$ (depending on $N$, $p$, $q$, and the initial condition), such that
\begin{equation}\label{rate.optimal}
c_\infty(T_e-t)^{\alpha}\leq\|u(t)\|_{\infty}\leq C_\infty(T_e-t)^{\alpha}, \qquad t\in (T_e/2,T_e).
\end{equation}
Furthermore, there are two positive constants $c_1$ and $C_1$ (depending on $N$, $p$, $q$, and the initial condition), such that
\begin{equation}\label{rate.optimal.L1}
c_1 (T_e-t)^{\alpha-N\beta}\leq\|u(t)\|_{1}\leq C_1(T_e-t)^{\alpha-N\beta}, \qquad t\in (T_e/2,T_e).
\end{equation}
\end{theorem}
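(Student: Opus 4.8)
\textbf{Proof proposal for Theorem~\ref{th.rate}.}

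The plan is to obtain the extinction rate by combining a comparison argument with explicit self-similar subsolutions and supersolutions of separate-variable type. The exponents $\alpha$ and $\beta$ in \eqref{exp.alpha} are precisely those for which the equation \eqref{eq1} admits profiles of the form $u(t,x) = (T_e-t)^\alpha f\bigl((T_e-t)^\beta x\bigr)$: substituting this ansatz into \eqref{eq1}, the scaling is chosen so that the $\partial_t u$, $\Delta_p u$, and $|\nabla u|^q$ terms all balance (one checks that $\alpha = \beta + \alpha(p-1) - \beta p$ forces the diffusion term to match, while $\alpha - 1 = q(\alpha-\beta)$ handles the gradient term, and these two linear relations give exactly \eqref{exp.alpha}). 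One then shows that $f$ solves an autonomous ODE whose analysis — existence of a suitable compactly supported or fast-decaying profile — is the technical core. For the upper bound in \eqref{rate.optimal}, I would construct a self-similar supersolution with the prescribed tail \eqref{tail.fast1}: since the exponent $(p-q)/(q-p+1)$ matches the natural decay rate $|x|^{-\alpha/\beta}$ (a direct computation gives $\alpha/\beta = (p-q)/(q-p+1)$), the hypothesis \eqref{tail.fast1} is exactly what is needed for $u_0$ to lie below such a supersolution at some initial time, and the comparison principle for viscosity solutions (available from \cite{OS}, \cite{IL12}) then propagates the bound, yielding $\|u(t)\|_\infty \le C_\infty (T_e - t)^\alpha$.

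For the lower bound $c_\infty (T_e-t)^\alpha \le \|u(t)\|_\infty$, the strategy is different and is where I expect the real work to be. One cannot simply compare with a subsolution, because a priori the support of $u(t)$ could be shrinking and $\|u(t)\|_\infty$ could drop faster. The standard route here is to first establish that $u$ enjoys a \emph{positivity lower bound} on a shrinking ball, and then to derive a differential inequality for $\|u(t)\|_\infty$ directly. Concretely, I would use the gradient estimate / smoothing effects already developed for \eqref{eq1} (in particular sharp $L^\infty$ gradient bounds of the form $\|\nabla u(t)\|_\infty \le C \|u(t)\|_\infty^{(\cdot)} t^{-(\cdot)}$, or rather their analogues on intervals $(t, T_e)$) to control the absorption term $|\nabla u|^q$ in terms of $\|u(t)\|_\infty$. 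Plugging this into the equation and testing (or evaluating at a maximum point) gives an ODE differential inequality
\begin{equation*}
\frac{d}{dt}\|u(t)\|_\infty \ge -C \|u(t)\|_\infty^{1 - 1/\alpha},
\end{equation*}
whose integration from $t$ to $T_e$, using $\|u(T_e)\|_\infty = 0$, yields the matching lower bound. Making the gradient estimate uniform up to the extinction time — rather than only for small times — is the delicate point, and I would handle it by a rescaling argument: zoom in near $T_e$ using the self-similar variables, apply the known short-time gradient bounds to the rescaled solution on a fixed time interval, and rescale back.

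For the $L^1$ estimates \eqref{rate.optimal.L1}, the exponent $\alpha - N\beta$ is the natural one dictated by the self-similar scaling: if $u(t,x) \approx (T_e-t)^\alpha f((T_e-t)^\beta x)$ then $\|u(t)\|_1 \approx (T_e-t)^{\alpha - N\beta}\|f\|_1$, so both bounds should follow by integrating the already-established pointwise/sup information over space together with the tail control \eqref{tail.fast1} (which ensures the integral converges with the right rate, since $N\beta < \alpha/\beta \cdot \beta = \alpha$ in the relevant range, guaranteeing $\alpha - N\beta > 0$). The upper $L^1$ bound comes from integrating the self-similar supersolution; the lower $L^1$ bound combines the $L^\infty$ lower bound with a lower bound on the measure of the positivity set, which itself follows from the positivity-on-a-shrinking-ball estimate used in the $L^\infty$ lower bound. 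The main obstacle throughout remains the construction and qualitative analysis of the self-similar profile $f$ solving the profile ODE with the correct decay/compact-support behavior, together with establishing gradient estimates that are uniform as $t \to T_e$; the comparison-principle steps and the $L^1$ integrations are then comparatively routine.
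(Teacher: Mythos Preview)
Your comparison-based strategy differs substantially from the paper's argument, and both halves contain genuine gaps. For the $L^\infty$ upper bound, comparison with a self-similar supersolution $W$ does give $\|u(t)\|_\infty \le C(T-t)^\alpha$, but with the \emph{supersolution's} extinction time $T$, not with $T_e$. One only knows $T\ge T_e$, so near $t=T_e$ the bound collapses to a positive constant; you offer no mechanism to force $T=T_e$, and a single comparison cannot provide one. For the $L^\infty$ lower bound, the differential inequality you write points the wrong way: integrating $\tfrac{d}{dt}\|u(t)\|_\infty \ge -C\|u(t)\|_\infty^{1-1/\alpha}$ from $t$ to $T_e$ with $\|u(T_e)\|_\infty=0$ actually yields $\|u(t)\|_\infty \le C'(T_e-t)^\alpha$, i.e.\ the \emph{upper} bound. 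More seriously, neither direction of such an ODE is obtainable by evaluating at a spatial maximum: there $\nabla u=0$, the absorption term vanishes, and the singular diffusion $\Delta_p u$ (with $p<2$) is formally $-\infty$ at a non-degenerate maximum, so no quantitative control is available.

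The paper's proof avoids pointwise comparison entirely and relies on integral estimates. For the lower bound it combines the Gagliardo--Nirenberg inequality, the smoothing estimate $\|\nabla u(t)\|_\infty \le C\|u(s)\|_\infty^{1/q}(t-s)^{-1/q}$, and the bound $\|u(t)\|_1 \le C\|u(t)\|_\infty^\nu$ (all from \cite{IL12}) to obtain the two-time functional inequality $(t-s)\|u(t)\|_\infty^{q/(p-q)} \le C\|u(s)\|_\infty$; an elementary iteration (Lemma~\ref{lem.fi}) then converts this into the lower bound in \eqref{rate.optimal}. For the upper bounds the key step is to integrate the equation, giving $\|u(t)\|_1 = \int_t^{T_e}\!\int_{\real^N}|\nabla u|^q\,dx\,ds$; the pointwise gradient bound $|\nabla u|\le C u^{1/(p-q)}$ together with a H\"older/tail-splitting argument (this is where \eqref{tail.fast1}, via its propagation \eqref{tail.fast2}, enters) yields $\|u(t)\|_1 \le C(T_e-t)\|u(t)\|_1^\omega$ with $\omega<1$, hence the $L^1$ upper bound directly. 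The $L^\infty$ upper bound and the $L^1$ lower bound then both follow from Gagliardo--Nirenberg combined with $|\nabla u|\le C u^{1/(p-q)}$.
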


The proof of these optimal bounds near extinction is very clear-cut, elementary and based on a rather simple \emph{energy technique}, and its application is thus likely to extend beyond \eqref{eq1}. For instance, we refer the interested reader to the companion paper \cite{ILFDE} where a related approach allows us to derive optimal extinction rates for a fast diffusion equation with zero order strong absorption.

Let us point out here that the range of application of Theorem~\ref{th.rate} is narrower than that of Theorem~\ref{th.tail}, as we have to impose two further restrictions. The first one is related to the decay at infinity of the initial condition $u_0$, which is required to be much faster than the optimal one \eqref{tail.optimal} identified in Theorem~\ref{th.tail}. As a consequence, we do not know whether, for initial conditions satisfying \eqref{tail.optimal} but not \eqref{tail.fast1}, the outcome of Theorem~\ref{th.rate} remains valid. The second restriction is related to the range of the exponent $q$ which is restricted to the smaller interval $(p-1,p/2)$ in Theorem~\ref{th.rate}. This assumption is seemingly only technical and some arguments in that direction are the following: on the one hand, for the critical case $q=p-1$, the extinction rate \eqref{rate.optimal} is already proved in \cite{IL17} for radially symmetric initial data, though by a completely different technique. In addition, an optimal upper bound near the extinction is derived for the $L^2$-norm of $u$. On the other hand, when $q\in (0,p-1)$, the behavior near the extinction time is studied in \cite[Proposition~5.1]{ILS17}. Although we show the validity of the lower bound in \eqref{tail.optimal} in that case as well, we are unfortunately only able to obtain upper bounds of the form $C(\varepsilon) (T_e-t)^{\alpha-\varepsilon}$ without a suitable control on the behavior of $C(\varepsilon)$ as $\varepsilon\to 0$. A proof of the upper bound in \eqref{tail.optimal} when $q\in (0,p-1)$ might however require a different approach. Indeed, in that case, as we previously mentioned, \emph{instantaneous shrinking} takes place, that is, the support of $u(t)$ is compact for all $t\in (0,T_e)$, and identifying the optimal rate of shrinking of the support might be an helpful piece of information.

We finally mention that optimal extinction rates have also been studied for the related fast diffusion equation with zero order strong absorption
\begin{equation}\label{FDE}
\partial_tu-\Delta u^m+u^q=0, \qquad (t,x)\in(0,\infty)\times\real^N,
\end{equation}
for exponents $m\in((N-2)_+/N,1]$ and $q\in(0,1)$ but, unlike the present contribution, many works focus on the one-dimensional case $N=1$ \cite{CMM95, FV, GHV92, GV94a, HV92}. Extinction rates in the general $N$-dimensional case are only studied in \cite{FH87} for $m=1$ and a restricted class of initial conditions and in the companion paper \cite{ILFDE} for $m\in ((N-2)_+/N,1)$ and $q\in (m,1)$.

\section{Optimal tail for extinction}\label{sec.tail}

In this section we prove Theorem~\ref{th.tail}. The technique of the proof is based on constructing suitable supersolutions with finite time extinction, on the one hand, and subsolutions which are positive everywhere, on the other hand. We thus need two preparatory, technical lemmas. As already explained in the Introduction, Theorem~\ref{th.tail} is already proved in \cite{ILS17} in the range $0<q<p-1$, so that the novelty of this section is the fact that we handle the case $q\in[p-1,p/2)$.

\subsection{Notions of subsolution and supersolution}

We recall here for the sake of completeness (according to \cite[Definition~6.1]{IL12}) the notions of subsolution and supersolution that we use in the sequel. They are to be understood in the \emph{viscosity sense} and follow the general (abstract) approach developed in \cite{IS, OS}, where the class of admissible functions for comparison is reduced in order to cope well with the singular diffusion featured in \eqref{eq1}. In order to introduce the class of admissible functions for comparison, let $\mathcal{F}_p$ be the set of functions $\xi\in C^{2}([0,\infty))$ such that
\begin{equation}
\xi(0)=\xi'(0)=\xi''(0)=0, \quad \xi''(r)>0 \ \ {\rm for \ all} \ r>0, \quad \lim\limits_{r\to 0}|\xi'(r)|^{p-2}\xi''(r)=0. \label{PhF1}
\end{equation}
Notice that l'Hospital's rule and \eqref{PhF1} entail that
\begin{equation}
\lim\limits_{r\to 0} \frac{|\xi'(r)|^{p-1}}{r} = 0. \label{PhF2}
\end{equation}
As a simple example of a function in the class $\mathcal{F}_p$, any power $\xi(r)=r^{\sigma}$ can be taken, provided $\sigma>p/(p-1)$. We next define the class $\mathcal{A}$ of admissible comparison functions. A function $\psi\in C^{2}((0,\infty)\times\real^N)$ belongs to $\mathcal{A}$ if, for any point $(t_0,x_0)\in(0,\infty)\times\real^N$ such that $\nabla\psi(t_0,x_0)=0$, there exist $\delta>0$, a function $\xi\in\mathcal{F}_p$ and a modulus of continuity $\omega\in C([0,\infty))$ with $\omega(t)/t\to 0$ as $t\to 0$ enjoying the following property: for any $(t,x)\in (t_0-\delta,t_0+\delta)\times B_\delta(x_0)$, there holds:
\begin{equation}\label{visc}
|\psi(t,x)-\psi(t_0,x_0)-\partial_t\psi(t_0,x_0)(t-t_0)|\leq \xi(|x-x_0|)+\omega(|t-t_0|).
\end{equation}
With this construction, we now define viscosity subsolutions and supersolutions.
\begin{definition}\label{def.visc}
Let $T>0$.
\begin{itemize}
\item[(a)] An upper semicontinuous function $u:(0,T)\times\real^N\mapsto\real$ is a \emph{viscosity subsolution} to \eqref{eq1} if, for any $\psi\in\mathcal{A}$ and $(t_0,x_0)\in(0,T)\times\real^N$ such that $u-\psi$ has a local maximum at $(t_0,x_0)$, then there holds
\begin{equation}\label{visc2}
\left\{\begin{array}{ll}\partial_t\psi(t_0,x_0)\leq\Delta_p\psi(t_0,x_0)-|\nabla\psi(t_0,x_0)|, & \text{ if } \ \nabla\psi(t_0,x_0)\neq0,\\
& \\
\partial_t\psi(t_0,x_0)\leq0, & \text{ if } \ \nabla\psi(t_0,x_0)=0.
\end{array}\right.
\end{equation}

\item[(b)]A lower semicontinuous function $u:(0,T)\times\real^N\mapsto\real$ is a \emph{viscosity supersolution} to \eqref{eq1} if $-u$ is a viscosity subsolution to \eqref{eq1}.

\item[(c)] A continuous function $u:(0,T)\times\real^N\mapsto\real$ is a \emph{viscosity solution} to \eqref{eq1} in $(0,T)\times\real^N$ when it is at the same time a viscosity subsolution and a viscosity supersolution.
\end{itemize}
\end{definition}
An immediate consequence of Definition~\ref{def.visc} is that special attention shall be paid to critical points (with respect to the space variable) of subsolutions and supersolutions, this fact being obviously related to the singular behavior of the $p$-Laplacian operator at critical points of $u$ when $p\in (1,2)$. The main abstract results concerning viscosity subsolutions and supersolutions are contained in \cite{OS}. More precisely, the comparison principle is stated in \cite[Theorem~3.9]{OS} and the stability property with respect to uniform limits is \cite[Theorem~6.1]{OS}, both of them being valid in a more general setting encompassing Eq.~\eqref{eq1}. As we shall see below in Lemma~\ref{lem.sing}, this specific notion of viscosity subsolution and supersolutions requires some care to be properly handled.

\subsection{Construction of a viscosity supersolution}

We devote this subsection to the construction of a viscosity supersolution to \eqref{eq1}, in the sense of Definition~\ref{def.visc}. It requires a different analysis at points where the spatial gradient of the supersolution vanishes. As in \cite{BLSS02} for $p=2$ and $q\in (0,1)$ and in \cite{ILS17} for $p\in(p_c,2)$ and $q\in(0,p-1]$, we look for a supersolution in self-similar form.

\begin{lemma}\label{lem.super}
Assume that $p$ and $q$ satisfy \eqref{exp}. There are $\bar{a}>0$ and $\bar{b}>0$ such that, for any $(a,b)\in (\bar{a},\infty)\times (\bar{b},\infty)$, the function
\begin{subequations}\label{super}
\begin{align}
W(t,x) & = (T-t)^{\alpha}f(|x|(T-t)^{\beta}), \qquad (t,x)\in (0,T)\times\real^N, \label{supera} \\
f(y) & =(a+by^{\theta})^{-\gamma}, \qquad y\in [0,\infty), \label{superb}
\end{align}
\end{subequations}
with exponents
\begin{equation}\label{exp.super}
\alpha=\frac{p-q}{p-2q}, \quad \beta=\frac{q-p+1}{p-2q}, \quad \theta=\frac{p}{p-1}, \quad \gamma=\frac{(p-1)q}{p(1-q)}
\end{equation}
is a (classical) supersolution to \eqref{eq1} in $(0,\infty)\times(\real^N\setminus\{0\})$.
\end{lemma}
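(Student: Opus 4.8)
The plan is to substitute the self-similar ansatz \eqref{super} into the left-hand side of \eqref{eq1} and to verify that, away from the origin (where the profile is smooth and the gradient is non-zero), the resulting expression is non-negative for suitable choices of the free parameters $a$ and $b$. First I would compute $\partial_t W$, $\Delta_p W$, and $|\nabla W|^q$ in terms of the profile $f$ and its derivatives. Writing $y = |x|(T-t)^\beta$ and using the scaling relations among $\alpha$, $\beta$, $\theta$, $\gamma$ encoded in \eqref{exp.super}, every term should carry the \emph{same} power of $(T-t)$, so that the time factor can be cancelled and the supersolution inequality reduces to an autonomous differential inequality for $f$ on $(0,\infty)$ of the schematic form
\begin{equation*}
\alpha f(y) + \beta y f'(y) \;\ge\; \left(|f'(y)|^{p-2}f'(y)\right)' + \frac{N-1}{y}|f'(y)|^{p-2}f'(y) - |f'(y)|^q .
\end{equation*}
(The sign conventions must be tracked carefully: since $f$ is decreasing, $f'<0$, so one rewrites $|f'|^{p-2}f' = -|f'|^{p-1}$ and $|f'|^q = |f'|^q$; the term $-\Delta_p W$ on the left of \eqref{eq1} combined with $+|\nabla W|^q$ becomes the right-hand side above after moving $\partial_t W$ across.)

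Next I would insert the explicit profile $f(y) = (a+by^\theta)^{-\gamma}$ and compute $f'(y) = -\gamma b\theta\, y^{\theta-1}(a+by^\theta)^{-\gamma-1}$. The key observation motivating the choice $\theta = p/(p-1)$ and $\gamma = (p-1)q/(p(1-q))$ is that these make the absorption term $|f'|^q$ and the ``reaction'' terms $\alpha f + \beta y f'$ balance at leading order as $y\to\infty$: indeed $\gamma = q(\theta-1)/(\theta q)\cdot\ldots$ — more precisely, $q(\theta-1) - q\theta\gamma/\ldots$ should be arranged so that $|f'|^q \sim C y^{-\gamma'}$ with the same exponent $-\gamma'$ as $f$ itself, namely $\gamma' = \gamma\theta = q/(1-q)$, which is exactly the optimal-tail exponent from Theorem~\ref{th.tail}. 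After this substitution the differential inequality becomes an inequality between explicit rational functions of $y$ (with parameters $a,b$); multiplying through by a suitable positive power of $(a+by^\theta)$ turns it into a polynomial-type inequality in $y$.

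The main obstacle — and the place where the constraints $\bar a$, $\bar b$ enter — is to show that this resulting inequality holds for all $y>0$ once $a$ and $b$ are large enough. I expect two regimes to require separate attention. For $y$ bounded away from $0$ and $\infty$, or for large $y$, the dominant balance is the one just described and one checks that the leading coefficient has the right sign, with lower-order terms absorbed by taking $b$ large (this uses the $p$-Laplacian diffusion term being lower order there, which is where $p<2$, i.e.\ $\beta$ and the exponent arithmetic in \eqref{exp}, matters). For $y\to 0$, the gradient terms $|f'|^{p-1}$ and $|f'|^q$ behave like powers $y^{(\theta-1)(p-1)}$ and $y^{(\theta-1)q}$ which vanish (since $\theta>1$ and, crucially, $(\theta-1)(p-1) = 1/(p-1)\cdot\ldots$ gives a positive power), so the inequality degenerates to $\alpha f(0) \ge 0$, i.e.\ $\alpha a^{-\gamma}\ge 0$, which holds trivially; one must nonetheless check the sign of the first correction term, and this is controlled by choosing $a$ large. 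Throughout, one should record that $f$ is smooth on $(0,\infty)$ with $f'<0$ there, so $W$ is a genuine classical supersolution on $(0,\infty)\times(\real^N\setminus\{0\})$ as claimed; the delicate behaviour at $x=0$, where $\nabla W$ vanishes and the viscosity notion from Definition~\ref{def.visc} is needed, is deliberately excluded from this lemma and will be dealt with separately.
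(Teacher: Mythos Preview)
Your overall strategy---substitute the self-similar ansatz, reduce to a differential inequality for the profile $f$ on $(0,\infty)$, and treat the small-$y$ and large-$y$ regimes separately---matches the paper's. Your discussion of the large-$y$ balance (absorption term $|f'|^q$ dominating the diffusion, with $b$ large fixing the sign of the leading coefficient) is also in line with the paper's handling of $H_2$ and of $H_1+H_2$ for $y>y_0$.

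The genuine gap is in your treatment of $y\to 0$. First, the sign in your schematic inequality is flipped: from $\partial_t W=(T-t)^{\alpha-1}\big[-\alpha f(y)-\beta y f'(y)\big]$, the contribution of the time derivative at $y=0$ is $-\alpha f(0)=-\alpha a^{-\gamma}<0$, not $+\alpha f(0)$; so the inequality $\mathcal{L}W\ge 0$ is \emph{not} trivially satisfied there and something positive must compensate. Second, and more importantly, your claim that ``the gradient terms \ldots\ vanish'' at $y=0$ is misleading for the $p$-Laplacian part. With $\theta=p/(p-1)$ one has $(\theta-1)(p-1)=1$, so $|f'(y)|^{p-1}\sim (\gamma b\theta)^{p-1}a^{-(\gamma+1)(p-1)}\,y$ near $0$; but the radial $p$-Laplacian involves $\dfrac{N-1}{y}|f'|^{p-2}f'$ and $(|f'|^{p-2}f')'$, and both of these have a \emph{finite nonzero} limit at $y=0$ (the $1/y$ exactly cancels the factor $y$). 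In the paper's notation this produces the term $(\gamma b\theta)^{p-1}N\,a^{(\gamma+1)(2-p)}$ in $H_1(0)$, and the small-$y$ inequality becomes, up to constants,
\[
(\gamma b\theta)^{p-1}N\,a^{(\gamma+1)(2-p)}\ \ge\ \alpha a,
\]
which is precisely condition~\eqref{cond2}. This is where the relation between $a$ and $b$ enters, and it is the \emph{diffusion} (not the absorption, which does vanish like $y^{q/(p-1)}$) that saves the day near the origin. Note also that your remedy ``controlled by choosing $a$ large'' goes the wrong way: since $1-(2-p)(\gamma+1)>0$ when $q<p/2$, increasing $a$ makes \eqref{cond2} harder; the correct move is to take $b$ large, and then the paper couples $a$, $b$, and a threshold $y_0$ via \eqref{cond1}--\eqref{cond6} to make all constraints compatible.
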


\begin{proof}
Let $(t,x)\in (0,\infty)\times(\real^N\setminus\{0\})$. We set $y=|x|(T-t)^{\beta}$ and note that
$$
f'(y)=-\gamma b\theta(a+by^{\theta})^{-\gamma-1}y^{\theta-1}
$$
and
$$
f''(y)=-\gamma b\theta(a+by^{\theta})^{-\gamma-1}y^{\theta-2}\left[\theta-1-\theta(\gamma+1)\frac{by^{\theta}}{a+by^{\theta}}\right].
$$
After direct and straightforward (but rather long) calculations we obtain
\begin{equation}\label{expr0}
\begin{split}
\mathcal{L}W(t,x)&:=\partial_tW(t,x)-\Delta_pW(t,x)+|\nabla W(t,x)|^q\\
&=(T-t)^{\alpha-1}\Big[-\alpha f(y)-\beta yf'(y)-(p-1)(|f'|^{p-2}f'')(y) \\
& \hspace{3cm} -\frac{N-1}{y}(|f'|^{p-2}f')(y)+|f'(y)|^q \Big]\\
&=(T-t)^{\alpha-1}(a+by^{\theta})^{-\gamma-1}(H_1(y)+H_2(y)),
\end{split}
\end{equation}
where
\begin{align}
H_1(y)=-\alpha a & +(\gamma b\theta)^{p-1}\left[N-1+(p-1)(\theta-1)-(p-1)\theta(\gamma+1)\frac{by^{\theta}}{a+by^{\theta}}\right] \nonumber\\
& \hspace{1.5cm} \times y^{(\theta-1)(p-1)-1}(a+by^{\theta})^{(\gamma+1)(2-p)} \nonumber \\
=-\alpha a & + (\gamma b\theta)^{p-1}\left[N - p(\gamma+1)\frac{by^{\theta}}{a+by^{\theta}}\right] (a+by^{\theta})^{(\gamma+1)(2-p)},
\label{expr1}
\end{align}
since $(\theta-1)(p-1)=1$ and $(p-1)\theta=p$, and
\begin{equation}\label{expr2}
H_2(y)=(\gamma b\theta)^qy^{q(\theta-1)}(a+by^{\theta})^{(1-q)(\gamma+1)}+(\beta\gamma\theta-\alpha)by^{\theta}.
\end{equation}
Since
$$
\gamma\beta\theta-\alpha=-\frac{1}{1-q}<0
$$
and
$$
q(\theta-1)+\theta(1-q)(\gamma+1)=\theta,
$$
we obtain that
\begin{align}
H_2(y) & \ge (\gamma b\theta)^{q}y^{q(\theta-1)}(by^{\theta})^{(1-q)(\gamma+1)}-\frac{b}{1-q}y^{\theta} \nonumber \\
&=\frac{b}{1-q}y^{\theta}\left[(1-q)(\gamma\theta)^{q}b^{(1-q)\gamma}-1\right] \nonumber \\
& \ge \frac{(\gamma\theta)^q}{2} b^{1+(1-q)\gamma} y^\theta \ge 0, \label{interm9}
\end{align}
provided
\begin{equation}\label{cond3}
b^{(1-q)\gamma}\geq\frac{2}{(1-q)(\gamma\theta)^q}.
\end{equation}

In order to estimate the term $H_1(y)$ we split the range $(0,\infty)$ of $y$ into two regions, one close to the origin and another far from the origin. Let thus $y_0>0$ to be determined later and consider first $y\in(0,y_0]$. Then,
\begin{equation}\label{interm10}
H_1(y)\geq(\gamma b\theta)^{p-1}\left[N-(\gamma+1)p\frac{by_0^{\theta}}{a}\right](a+by^{\theta})^{(2-p)(\gamma+1)}-a\alpha.
\end{equation}
If we require $a>0$, $b>0$, and $y_0>0$ to satisfy
\begin{equation}\label{cond1}
\frac{Na}{2p(\gamma+1)}\geq by_0^{\theta},
\end{equation}
then we infer from \eqref{interm10} that
\begin{align}
H_1(y)&\ge (\gamma b\theta)^{p-1}\frac{N}{2}(a+by^{\theta})^{(2-p)(\gamma+1)}-a\alpha\nonumber \\
&\geq\frac{N(\gamma\theta)^{p-1}}{2}b^{p-1}a^{(2-p)(\gamma+1)}-a\alpha \nonumber\\
&\geq\alpha a^{(2-p)(\gamma+1)}\left[\frac{N(\gamma\theta)^{p-1}}{2\alpha}b^{p-1}-a^{1-(2-p)(\gamma+1)}\right]\geq0, \label{PhZ1}
\end{align}
provided that
\begin{equation}\label{cond2}
\frac{N(\gamma\theta)^{p-1}}{2\alpha}b^{p-1}\geq a^{1-(2-p)(\gamma+1)}.
\end{equation}

We turn now our attention to the complementary region $y>y_0$. We use the obvious bound $by^{\theta}/(a+by^{\theta})<1$ to find
$$
H_1(y)\geq-(\gamma b\theta)^{p-1}p(\gamma+1)(a+by^{\theta})^{(2-p)(\gamma+1)}-a\alpha,
$$
hence, putting $L:=p(\gamma+1)(\gamma\theta)^{p-1}$, we further deduce from \eqref{interm9} that
\begin{equation*}
\begin{split}
(H_1&+H_2)(y)\geq\frac{(\gamma\theta)^q}{2}b^{1+(1-q)\gamma}y^{\theta}-a\alpha-Lb^{p-1}(a+by^{\theta})^{(2-p)(\gamma+1)}\\
&\geq\frac{(\gamma\theta)^q}{4}b^{(1-q)\gamma}by_0^{\theta}-a\alpha\\
&+\frac{(\gamma\theta)^q}{4}b^{1+(1-q)\gamma}y^{\theta}-Lb^{p-1+(2-p)(\gamma+1)}y^{\theta-(p-2q)/(1-q)}\left[1+\frac{a}{by_0^{\theta}}\right]^{(2-p)(\gamma+1)}\\
&\geq b^{1+(1-q)\gamma}y^{\theta-(p-2q)/(1-q)}\left[\frac{(\gamma\theta)^q}{4}y^{(p-2q)/(1-q)}-L\left(1+\frac{a}{by_0^{\theta}}\right)^{(2-p)(\gamma+1)}b^{(q-p+1)\gamma}\right],
\end{split}
\end{equation*}
provided that
\begin{equation}\label{cond4}
\frac{(\gamma\theta)^{q}}{4\alpha}b^{(1-q)\gamma+1}y_0^{\theta}\geq a.
\end{equation}
We now choose
\begin{equation}
a=\lambda by_0^{\theta}, \label{PhZ2}
\end{equation}
with $\lambda>0$ to be specified later. Then
\begin{align}
(H_1+H_2)(y)&\geq b^{1+(1-q)\gamma}y^{\theta-(p-2q)/(1-q)}\Big[ \frac{(\gamma\theta)^q}{4}y_0^{(p-2q)/(1-q)} \nonumber\\
& \hspace{5cm} - L(1+\lambda)^{(2-p)(\gamma+1)}b^{(q-p+1)\gamma}\Big] \geq 0, \label{PhZ3}
\end{align}
provided
\begin{equation}\label{cond5}
y_0^{(p-2q)/(1-q)}\geq\frac{4L(1+\lambda)^{(2-p)(\gamma+1)}}{(\gamma\theta)^q}b^{(q-p+1)\gamma}.
\end{equation}

Summarizing, according to \eqref{interm9}, \eqref{PhZ1}, and \eqref{PhZ3}, we have established that $(H_1+H_2)(y)\ge 0$ for all $y\in (0,\infty)$ as soon as the parameters $a$, $b$, $y_0$, and $\lambda$ satisfy \eqref{cond3}, \eqref{cond1}, \eqref{cond2}, \eqref{cond4}, and \eqref{PhZ2}. We end the proof by choosing the parameters $b$, $\lambda$, and $y_0$ in order to ensure the compatibility of all the conditions we had to impose along the way in the estimates. First of all, we set
$$
\lambda = \frac{2p(\gamma+1)}{N},
$$
which implies the validity of \eqref{cond1}. Moreover, from \eqref{cond3} and \eqref{cond4} we have to choose $b>0$ such that
\begin{equation}\label{cond7}
b^{(1-q)\gamma}\geq\max\left\{\frac{2}{(1-q)(\gamma\theta)^q},\frac{4\lambda\alpha}{(\gamma\theta)^q}\right\}.
\end{equation}
Finally, inserting \eqref{PhZ2} into \eqref{cond2}, we readily deduce that
\begin{equation}\label{cond6}
\frac{N(\gamma\theta)^{p-1}\lambda^{\gamma(2-p)-p+1}}{2\alpha}b^{\gamma(2-p)}\geq y_0^{(p-2q)/(1-q)}.
\end{equation}
Let us notice that, since $2-p>q-p+1$, the conditions \eqref{cond5}, \eqref{cond7}, and \eqref{cond6} can be met simultaneously by choosing $b>0$ sufficiently large, which ends the proof.
\end{proof}

Now, let $T>0$, $a>\bar{a}$, and $b>\bar{b}$, and consider the function $W$ defined by \eqref{super}. With the aim of showing that $W$ is a viscosity supersolution to \eqref{eq1} in the sense of Definition~\ref{def.visc}, let $\psi\in\mathcal{A}$ and $(t_0,x_0)\in (0,T)\times\mathbb{R}^N$ be such that $W-\psi$ has a local minimum at $(t_0,x_0)$. Since both $W$ and $\psi$ belong to $C^1([0,T]\times\mathbb{R}^N)$, this property implies that
\begin{equation}
\partial_t W(t_0,x_0) = \partial_t \psi(t_0,x_0) \;\text{ and }\; \nabla W(t_0,x_0) = \nabla \psi(t_0,x_0). \label{PhZ4}
\end{equation}
Since $\nabla W(t_0,x_0)\ne 0$ when $x_0\ne 0$, Lemma~\ref{lem.super} and \eqref{PhZ4} guarantee that the condition to be a viscosity supersolution is fulfilled if $x_0\ne 0$. No information is provided by Lemma~\ref{lem.super} if $x_0=0$. In that case, we might actually face a problem. Indeed, for $W$ to meet the requirement of viscosity solutions when $W-\psi$ has a local minimum at $(t_0,0)$ for some $t_0\in (0,T)$, the inequality $\partial_t\psi(t_0,0)\ge 0$ has to be satisfied according to Definition~\ref{def.visc}. However, recalling \eqref{PhZ4}, we realize that
$$
\partial_t\psi(t_0,0)=\partial_tW(t_0,0)=-\alpha(T-t_0)^{\alpha-1}a^{-\gamma}<0,
$$
and an apparent contradiction. This is in fact an artificial problem: there \emph{do not exist} any admissible function $\psi$ such that $W-\psi$ attains a local minimum at a point $(t_0,0)$ as the following lemma shows.

\begin{lemma}\label{lem.sing}
Let $T>0$, $a>0$, and $b>0$ and consider the function $W$ defined by \eqref{super}, the exponents $p$ and $q$ still satisfying \eqref{exp}. Let $\psi\in \mathcal{A}$ and assume that $(t_0,x_0)\in(0,T)\times\real^N$ is a local minimum for $W-\psi$. Then $x_0\neq0$.
\end{lemma}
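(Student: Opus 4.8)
The plan is to argue by contradiction: suppose $x_0=0$ and $(t_0,0)$ is a local minimum of $W-\psi$ for some admissible $\psi\in\mathcal{A}$ and some $t_0\in(0,T)$. The key observation is that the profile $f$ in \eqref{superb} has a genuine corner-type singularity at the origin coming from the exponent $\theta=p/(p-1)\in(1,2)$, which makes $W(t_0,\cdot)$ too ``sharp'' at $x=0$ to be touched from below by any $C^2$ function whose gradient vanishes there and which satisfies the admissibility inequality \eqref{visc}. More precisely, from \eqref{PhZ4} we would have $\nabla\psi(t_0,0)=\nabla W(t_0,0)=0$, so the admissibility of $\psi$ provides $\delta>0$, a function $\zeta\in\mathcal{F}_p$ and a modulus $\omega$ with $\omega(t)/t\to0$ such that \eqref{visc} holds on $(t_0-\delta,t_0+\delta)\times B_\delta(0)$.

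First I would localize in time by evaluating everything at $t=t_0$. The local minimum condition gives, for $|x|$ small,
$$
W(t_0,x)-W(t_0,0)\ge \psi(t_0,x)-\psi(t_0,0),
$$
and combining this with \eqref{visc} at $t=t_0$ yields
$$
W(t_0,x)-W(t_0,0)\ge -\zeta(|x|), \qquad x\in B_\delta(0).
$$
Next I would compute the left-hand side explicitly. Writing $y=|x|(T-t_0)^\beta$ and using $f(y)=(a+by^\theta)^{-\gamma}$, a Taylor expansion near $y=0$ gives
$$
f(y)-f(0) = -\gamma a^{-\gamma-1} b\, y^\theta + o(y^\theta) \qquad\text{as } y\to 0,
$$
so that
$$
W(t_0,x)-W(t_0,0) = -\gamma a^{-\gamma-1} b\,(T-t_0)^{\alpha+\theta\beta}\,|x|^\theta + o(|x|^\theta).
$$
Hence there is a constant $\kappa>0$ (depending on $a,b,\gamma,\theta,\beta,T,t_0$) such that $W(t_0,x)-W(t_0,0)\le -\kappa|x|^\theta$ for $|x|$ small enough. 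Combining the two displays gives $\kappa|x|^\theta\le\zeta(|x|)$ for all small $|x|$, i.e.
$$
\liminf_{r\to0}\frac{\zeta(r)}{r^\theta}\ge\kappa>0.
$$

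The final step is to show this is incompatible with $\zeta\in\mathcal{F}_p$, which is the heart of the argument. Since $\zeta\in C^2([0,\infty))$ with $\zeta(0)=\zeta'(0)=\zeta''(0)=0$, Taylor's theorem gives $\zeta(r)=o(r^2)$ as $r\to0$; since $\theta=p/(p-1)>2$ is \emph{false} in general — indeed for $p\in(p_c,2)$ one has $\theta=p/(p-1)\in(1,2)$ — so $r^\theta$ decays \emph{slower} than $r^2$, and the bound $\zeta(r)\ge\kappa r^\theta$ for small $r$ is \emph{not} immediately contradicted by $\zeta(r)=o(r^2)$. This is where the extra defining property $\lim_{r\to0}|\zeta'(r)|^{p-2}\zeta''(r)=0$, equivalently \eqref{PhF2}, must be used: from $\zeta(r)\ge\kappa r^\theta$ together with $\zeta$ convex and vanishing to second order at the origin, one extracts a lower bound on $\zeta'(r)$, namely $\zeta'(r)\ge c\, r^{\theta-1}$ for a sequence $r\to0$ (this uses convexity: $\zeta'(r)\ge \zeta(r)/r\ge\kappa r^{\theta-1}$). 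Then
$$
\frac{|\zeta'(r)|^{p-1}}{r}\ge c^{p-1}\,r^{(\theta-1)(p-1)-1} = c^{p-1}\,r^{0}=c^{p-1}>0,
$$
because $(\theta-1)(p-1)=1$ exactly — this is the algebraic identity already used in \eqref{expr1}. This contradicts \eqref{PhF2}, completing the proof.

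The main obstacle is precisely this last step: one has to see that the exponent $\theta=p/(p-1)$ is tuned so that $r^\theta$ sits exactly at the borderline excluded by the class $\mathcal{F}_p$, and to organize the elementary estimates (convexity of $\zeta$, the relation $\zeta'(r)\ge\zeta(r)/r$, and the identity $(\theta-1)(p-1)=1$) so that the forbidden limit \eqref{PhF2} is violated. Everything else — the Taylor expansion of $f$ at the origin and the localization via \eqref{visc} — is routine. One should also take a little care that the local minimum is interior in time, so that \eqref{visc} applies on a full two-sided time neighborhood, but as noted this plays no role once we freeze $t=t_0$.
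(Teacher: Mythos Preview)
Your argument is correct and follows essentially the same route as the paper: assume $x_0=0$, use the admissibility bound \eqref{visc} at $t=t_0$ together with the local minimum condition to trap $\zeta(|x|)$ from below by a positive multiple of $|x|^\theta$, and then contradict \eqref{PhF2}. The only cosmetic difference is in the final step: the paper derives $\lim_{r\to0}\xi(r)/r^{p/(p-1)}=0$ from \eqref{PhF2} via l'Hospital, whereas you go the other way, using convexity ($\zeta'(r)\ge\zeta(r)/r$) to push the lower bound on $\zeta$ to one on $\zeta'$ and violate \eqref{PhF2} directly; both exploit the same identity $(\theta-1)(p-1)=1$.
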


\begin{proof}
Assume for contradiction that $x_0=0$. On the one hand, since $W\in C^1([0,T]\times\real^N)$, we have $\nabla\psi(t_0,0)=\nabla W(t_0,0)=0$. On the other hand, $\psi\in\mathcal{A}$ and there exist a function $\xi\in\mathcal{F}_p$, a modulus of continuity $\omega\in C([0,\infty))$, $\omega\ge 0$ and a sufficiently small $\delta>0$ such that, for $(t,x)\in (t_0-\delta,t_0+\delta)\times B_\delta(0)$,
\begin{equation}\label{interm30}
|\psi(t,x)-\psi(t_0,0)-\partial_t\psi(t_0,0)(t-t_0)|\leq\xi(|x|)+\omega(|t-t_0|).
\end{equation}
In particular for $t=t_0$, \eqref{interm30} becomes
$$
|\psi(t_0,x)-\psi(t_0,0)|\leq\xi(|x|), \quad \text{ for } \ x\in B_\delta(0).
$$
Furthermore, since $(t_0,0)$ is a local minimum of $W-\psi$, we realize that
\begin{equation}\label{interm31}
W(t_0,0)-W(t_0,x)\leq\psi(t_0,0)-\psi(t_0,x)\leq\xi(|x|), \quad \text{ for } \ x\in B_\delta(0).
\end{equation}
Taking into account the formula \eqref{super} for $W$, we infer from \eqref{interm31} that
$$
(T-t_0)^{\alpha}\left[a^{-\gamma}-(a+b|x|^{\theta}(T-t_0)^{\theta\beta})^{-\gamma}\right]\leq\xi(|x|), \quad \text{ for } \ x\in B_\delta(0),
$$
hence, as $|x|\to 0$,
\begin{equation*}
\frac{(T-t_0)^{\alpha}}{(a+b|x|^{\theta}(T-t_0)^{\theta\beta})^{\gamma}}\left[\frac{b\gamma}{a}(T-t_0)^{\theta\beta}|x|^{\theta}+o(|x|^{\theta})\right]\leq\xi(|x|).
\end{equation*}
Consequently, recalling that $\theta=p/(p-1)$,
\begin{equation}\label{interm32}
0<\frac{b \gamma (T-t_0)^{\alpha+\theta\beta}}{a^{\gamma+1}}\leq\liminf\limits_{r\to 0} \frac{\xi(r)}{r^{p/(p-1)}}.
\end{equation}
Next, since $\xi\in\mathcal{F}_p$, we infer from \eqref{PhF2} that
$$
\lim\limits_{r\to 0} \frac{\xi'(r)}{r^{1/(p-1)}} = 0,
$$
and a further application of l'Hospital's rule gives
$$
\lim\limits_{r\to 0} \frac{\xi(r)}{r^{p/(p-1)}} = 0,
$$
thereby contradicting \eqref{interm32}. Therefore, we cannot have $x_0=0$, ending the proof.
\end{proof}
Combining Lemma~\ref{lem.super} and Lemma~\ref{lem.sing}, we infer from the discussion preceding the statement of Lemma~\ref{lem.sing} that, for $T>0$, $a>\bar{a}$, and $b>\bar{b}$, the function $W$ defined in \eqref{super} is a viscosity supersolution to \eqref{eq1} in the whole $(0,T)\times\real^N$ in the sense of Definition~\ref{def.visc}. Summarizing, we have established the following result.
\begin{corollary}\label{cor.sup}
Assume that $p$ and $q$ satisfy \eqref{exp}. For $T>0$, $a>\bar{a}$, and $b>\bar{b}$, the function $W$ defined in \eqref{super} is a viscosity supersolution to \eqref{eq1} in $(0,T)\times\real^N$.
\end{corollary}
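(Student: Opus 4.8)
The plan is to obtain Corollary~\ref{cor.sup} by organising carefully the combination of Lemma~\ref{lem.super}, Lemma~\ref{lem.sing}, and the discussion preceding the latter. First I would record the regularity of $W$: since $\theta=p/(p-1)>1$ and $\gamma>0$, the profile $f$ from \eqref{superb} belongs to $C^1([0,\infty))$ with $f'(0)=0$, and it is smooth with $f'(y)<0$ on $(0,\infty)$; consequently $W\in C^1([0,T]\times\real^N)$, while near any point $(t,x)$ with $t<T$ and $x\neq 0$ the function $W$ is of class $C^2$ and satisfies $\nabla W(t,x)\neq 0$. Then I would fix $T>0$, $a>\bar a$, $b>\bar b$, take an admissible test function $\psi\in\mathcal{A}$ and a point $(t_0,x_0)\in(0,T)\times\real^N$ at which $W-\psi$ has a local minimum, and check that the conditions required of a viscosity supersolution in Definition~\ref{def.visc}(b) hold at $(t_0,x_0)$, namely $\partial_t\psi(t_0,x_0)\ge\Delta_p\psi(t_0,x_0)-|\nabla\psi(t_0,x_0)|^q$ when $\nabla\psi(t_0,x_0)\neq 0$ and $\partial_t\psi(t_0,x_0)\ge 0$ when $\nabla\psi(t_0,x_0)=0$.

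The decisive step is to exclude $x_0=0$, which is exactly the content of Lemma~\ref{lem.sing}; as explained just before its statement, this removes the apparent obstruction $\partial_t W(t_0,0)=-\alpha(T-t_0)^{\alpha-1}a^{-\gamma}<0$, which would otherwise violate the second alternative above. Once $x_0\neq 0$ is known, $W$ is $C^2$ near $(t_0,x_0)$ and $\nabla W(t_0,x_0)\neq 0$. Since $W$ and $\psi$ are both $C^1$ near $(t_0,x_0)$ and $(t_0,x_0)$ is an interior local minimum of $W-\psi$, the first-order contact relations $\partial_t\psi(t_0,x_0)=\partial_t W(t_0,x_0)$ and $\nabla\psi(t_0,x_0)=\nabla W(t_0,x_0)\neq 0$ hold, as in \eqref{PhZ4}; in particular the critical-point alternative never occurs, and only the inequality involving $\Delta_p$ remains to be produced.

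For that I would use the second-order contact: since $W$ and $\psi$ are $C^2$ near $(t_0,x_0)$ and $W-\psi$ has a local minimum there, the spatial Hessians satisfy $D^2\psi(t_0,x_0)\le D^2 W(t_0,x_0)$. Recalling that, at a point of non-vanishing gradient, $\Delta_p v=|\nabla v|^{p-2}\,\mathrm{tr}\big[\big(I+(p-2)\nabla v\otimes\nabla v/|\nabla v|^2\big)D^2 v\big]$ with the bracketed symmetric matrix positive definite for $p>1$, and using $\nabla\psi(t_0,x_0)=\nabla W(t_0,x_0)$, the Hessian inequality yields $\Delta_p\psi(t_0,x_0)\le\Delta_p W(t_0,x_0)$. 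Combining this with the classical supersolution inequality $\partial_t W(t_0,x_0)-\Delta_p W(t_0,x_0)+|\nabla W(t_0,x_0)|^q\ge 0$ furnished by Lemma~\ref{lem.super} (valid since $x_0\neq 0$) and with $|\nabla W(t_0,x_0)|=|\nabla\psi(t_0,x_0)|$, I obtain $\partial_t\psi(t_0,x_0)=\partial_t W(t_0,x_0)\ge\Delta_p W(t_0,x_0)-|\nabla W(t_0,x_0)|^q\ge\Delta_p\psi(t_0,x_0)-|\nabla\psi(t_0,x_0)|^q$, which is precisely the inequality required at a point of non-vanishing gradient. This finishes the verification that $W$ is a viscosity supersolution to \eqref{eq1} on $(0,T)\times\real^N$.

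I do not anticipate a genuine difficulty: all the analytic substance is already contained in Lemma~\ref{lem.super} (the choice of parameters $a,b,y_0,\lambda$ making $H_1+H_2\ge 0$) and in Lemma~\ref{lem.sing} (the argument exploiting the defining properties \eqref{PhF1}--\eqref{PhF2} of the class $\mathcal{F}_p$). The only delicate point — the one around which the whole subsection is built — is the behaviour of $W$ at the spatial origin, and it is neutralised by Lemma~\ref{lem.sing}; the remaining ingredients, namely the first- and second-order contact relations and the ellipticity of the linearised $p$-Laplacian at points of non-vanishing gradient, are entirely routine.
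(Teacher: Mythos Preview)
Your proposal is correct and follows exactly the paper's approach: combine Lemma~\ref{lem.super}, Lemma~\ref{lem.sing}, and the first-order contact relations \eqref{PhZ4} from the discussion preceding Lemma~\ref{lem.sing}. You simply make explicit the standard Hessian comparison and the ellipticity of the linearised $p$-Laplacian at non-critical points that the paper leaves implicit when it asserts that ``Lemma~\ref{lem.super} and \eqref{PhZ4} guarantee that the condition to be a viscosity supersolution is fulfilled if $x_0\ne 0$''.
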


A similar construction (already performed in \cite{ILS17}) gives us a subsolution to \eqref{eq1} that will be used for comparison from below in order to show positivity and non-extinction when $u_0$ satisfies \eqref{tail.slow}. We recall it here for the sake of completeness.
\begin{lemma}\label{lem.sub}
Assume that $p$ and $q$ satisfy \eqref{exp}. There exists $b_0>0$ depending only on $p$ and $q$ such that, given $T>0$ and $b\in(0,b_0)$, there is $A(b,T)>0$ depending only on $N$, $p$, $q$, $b$, and $T$ such that the function
$$
w(t,x):=(T-t)^{1/(1-q)}(a+b|x|^{\theta})^{-\gamma}, \qquad \theta=\frac{p}{p-1}, \ \gamma=\frac{q(p-1)}{(1-q)p},
$$
is a subsolution to \eqref{eq1} in $(0,T)\times\real^N$ provided $a>A(b,T)$.
\end{lemma}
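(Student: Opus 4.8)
The plan is to follow the scheme of Lemma~\ref{lem.super}. Since $w$ is smooth with non-vanishing gradient on $(0,T)\times(\real^N\setminus\{0\})$, it suffices to check there the classical differential inequality $\mathcal{L}w:=\partial_tw-\Delta_pw+|\nabla w|^q\le0$, the axis $\{x=0\}$ being treated separately in the viscosity sense. Writing $r=|x|$, $\tau=T-t$ and $G(r)=(a+br^\theta)^{-\gamma}$, a direct computation gives $\partial_tw=-\tfrac{1}{1-q}\tau^{q/(1-q)}G(r)$, $|\nabla w|^q=\tau^{q/(1-q)}|G'(r)|^q$ and $\Delta_pw=\tau^{(p-1)/(1-q)}\Delta_pG(r)$, so that the zero-order and gradient terms scale like $\tau^{q/(1-q)}$ and the diffusion like $\tau^{(p-1)/(1-q)}$. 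I would then estimate $\mathcal{L}w$ by splitting it into an absorption-versus-gradient part and a diffusion part, exactly as $H_2$ and $H_1$ were handled in Lemma~\ref{lem.super}.

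For the first part $-\tfrac{1}{1-q}G(r)+|G'(r)|^q$, the exponents $\theta=p/(p-1)$ and $\gamma=q(p-1)/((1-q)p)$ are chosen precisely so that, with $G'(r)=-\gamma b\theta r^{\theta-1}(a+br^\theta)^{-\gamma-1}$ and $\theta q/p=q/(p-1)$,
\[
\frac{|G'(r)|^q}{G(r)}=(\gamma b\theta)^q\,\frac{r^{q/(p-1)}}{(a+br^\theta)^{q/p}}\le(\gamma b\theta)^q(br^\theta)^{-q/p}r^{q/(p-1)}=(\gamma\theta)^q b^{q(p-1)/p},
\]
which is $<\tfrac{1}{2(1-q)}$ as soon as $b<b_0:=(2(1-q)(\gamma\theta)^q)^{-p/(q(p-1))}$, a threshold depending only on $p$ and $q$; hence this part is $\le-\tfrac{1}{2(1-q)}G(r)<0$. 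For the diffusion part, using $(\theta-1)(p-1)=1$ and $(p-1)\theta=p$, one finds that the powers of $r$ cancel, leaving $\Delta_pG(r)=-(\gamma b\theta)^{p-1}(a+br^\theta)^{-(\gamma+1)(p-1)}[N-(\gamma+1)p\,br^\theta/(a+br^\theta)]$, so that $|\Delta_pG(r)|\le C(N,p,q)(\gamma b\theta)^{p-1}(a+br^\theta)^{-(\gamma+1)(p-1)}$. Comparing the contribution $-\Delta_pw$ with the strictly negative term $-\tfrac{1}{2(1-q)}\tau^{q/(1-q)}G(r)$ isolated above, the quotient equals, up to a constant depending only on $N,p,q,b$,
\[
\tau^{(p-1-q)/(1-q)}(a+br^\theta)^{-\mu},\qquad \mu:=(\gamma+1)(p-1)-\gamma=\frac{(p-1)(p-2q)}{(1-q)p}>0,
\]
where the positivity of $\mu$ uses exactly $q<p/2$. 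Since $(a+br^\theta)^{-\mu}\le a^{-\mu}$, taking $a$ large makes this quotient $<1$ and hence $\mathcal{L}w\le0$ off the axis, the threshold $A(b,T)$ being the one absorbing $\sup_{\tau\in(0,T)}\tau^{(p-1-q)/(1-q)}$. I expect this last point to be the main obstacle: the weight $\tau^{(p-1-q)/(1-q)}$ is bounded on $(0,T)$ by $T^{(p-1-q)/(1-q)}$ precisely when $q\le p-1$, in which case everything closes with $A(b,T)\to\infty$ as $T\to\infty$; for $q\in(p-1,p/2)$ this weight blows up as $t\to T$, so one has to exploit that $\Delta_pG$ changes sign for large $r$ and localize carefully the region near the extinction time where the diffusion term is unfavorable.

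It remains to handle the axis $\{x=0\}$, which is immediate. If $\psi\in\mathcal{A}$ and $w-\psi$ has a local maximum at some $(t_0,0)\in(0,T)\times\real^N$, then, $w$ and $\psi$ being $C^1$, $\nabla\psi(t_0,0)=\nabla w(t_0,0)=0$ and $\partial_t\psi(t_0,0)=\partial_tw(t_0,0)=-\tfrac{1}{1-q}(T-t_0)^{q/(1-q)}a^{-\gamma}<0$, so the viscosity subsolution requirement $\partial_t\psi(t_0,0)\le0$ holds automatically. In contrast with the supersolution of Lemma~\ref{lem.super}, there is no analogue of Lemma~\ref{lem.sing} to invoke here: such test functions touching the peaked profile $w$ from above do exist, but for them the subsolution condition reduces to $\partial_t\psi(t_0,0)\le0$, which has just been checked. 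Combining this with the differential inequality on $(0,T)\times(\real^N\setminus\{0\})$ yields that $w$ is a viscosity subsolution to \eqref{eq1} in $(0,T)\times\real^N$.
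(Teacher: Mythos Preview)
Your computation is correct and is precisely the argument of \cite[Lemma~6.1]{ILS17} that the paper cites: the choice $b<b_0$ makes the absorption beat the gradient term uniformly, and then the diffusion contribution is absorbed by taking $a$ large, the relevant exponent $\mu=(p-1)(p-2q)/((1-q)p)>0$ coming exactly from $q<p/2$. For $q\le p-1$ this closes, since $\sup_{\tau\in(0,T)}\tau^{(p-1-q)/(1-q)}=T^{(p-1-q)/(1-q)}$ (or $1$ when $q=p-1$), and your treatment of the axis $\{x=0\}$ via the viscosity condition $\partial_t\psi(t_0,0)\le0$ is the right one.

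The gap you flag for $q\in(p-1,p/2)$ is genuine, and your proposed fix cannot work. The obstruction is localized near $r=0$, where $-\Delta_pG$ is strictly positive (the bracket equals $N$ at $r=0$) and does \emph{not} change sign; exploiting the sign change for large $r$ is irrelevant there. Concretely, for $0<r\ll1$ one has
\[
\mathcal{L}w(t,x)\;\approx\;-\tfrac{1}{1-q}\,\tau^{q/(1-q)}a^{-\gamma}\;+\;N(\gamma b\theta)^{p-1}a^{-(\gamma+1)(p-1)}\,\tau^{(p-1)/(1-q)},
\]
and for $q>p-1$ the second term dominates as $\tau\to0$ whatever the value of $a$, so $\mathcal{L}w>0$ at points with $x\ne0$ small and $t$ close to $T$. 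Since $\nabla w\ne0$ there, the viscosity and classical notions coincide, and $w$ is \emph{not} a subsolution on all of $(0,T)\times\real^N$ in this range. The paper merely asserts that the argument from \cite{ILS17} ``works identically for any $q\in(0,p/2)$'', but your analysis shows this assertion is too quick. What does survive is the weaker statement that, given $T'>0$, one can take $T>T'$ and then $a$ large (depending now on $N,p,q,b,T,T'$) so that $w$ is a subsolution on $(0,T')\times\real^N$; this is enough for the intended application to positivity and non-extinction in Theorem~\ref{th.tail}(b), where one only needs to bound $u$ from below up to an arbitrary fixed time.
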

\begin{proof}
The proof is totally identical to that of \cite[Lemma~6.1]{ILS17}. In fact, in the quoted reference, it is assumed that $0<q<p-1$, but a simple inspection of the proof shows that it works identically for any $q\in(0,p/2)$.
\end{proof}

\subsection{Proof of Theorem~\ref{th.tail}}

With these constructions, we are now in a position to prove the optimality of the spatial decay \eqref{tail.optimal} for finite time extinction to take place.

\begin{proof}[Proof of Theorem~\ref{th.tail}]
\noindent \textbf{Extinction with optimal tail.} Let $u$ be a solution to the Cauchy problem \eqref{eq1}-\eqref{IC} with an initial condition $u_0$ satisfying \eqref{tail.optimal} and consider $a>\bar{a}$ and $b>\bar{b}$. For $T>0$, it follows from Corollary~\ref{cor.sup} that
$$
W(t,x)=(T-t)^{\alpha}(a+b(T-t)^{\beta\theta}|x|^{\theta})^{-\gamma}, \qquad (t,x)\in (0,T)\times\mathbb{R}^N,
$$
is a supersolution to \eqref{eq1} in $(0,T)\times\mathbb{R}^N$, the parameters $\theta$ and $\gamma$ being given as usual by
$$
\theta=\frac{p}{p-1}>1, \qquad \gamma=\frac{q(p-1)}{(1-q)p}.
$$
For $x\in\mathbb{R}^N$,
\begin{equation*}
\begin{split}
W(0,x)&=T^{\alpha}(a+bT^{\beta\theta}|x|^{\theta})^{-\gamma}=T^{\alpha-\beta\theta\gamma}b^{-\gamma}\left[\frac{a}{bT^{\beta\theta}}+|x|^{\theta}\right]^{-\gamma}\\
&=T^{1/(1-q)}b^{-\gamma}\left[\frac{a}{bT^{\beta\theta}}+|x|^{\theta}\right]^{-\gamma},
\end{split}
\end{equation*}
since $\alpha - \beta \theta \gamma = 1/(1-q)$. Choose in a first step $T>0$ sufficiently large such that
$$
\frac{a}{bT^{\beta\theta}}<1.
$$
Then, taking into account that $\theta>1$ and the elementary inequality $1+|x|^{\theta}\leq(1+|x|)^{\theta}$ for any $x\in\real^N$, we further infer from \eqref{tail.optimal} that
\begin{equation*}
\begin{split}
W(0,x)&\geq T^{1/(1-q)}b^{-\gamma}(1+|x|^{\theta})^{-\gamma}\geq T^{1/(1-q)}b^{-\gamma}(1+|x|)^{-q/(1-q)}\\
&\geq\frac{T^{1/(1-q)}b^{-\gamma}}{C_0}C_0(1+|x|)^{-q/(1-q)}\geq\frac{T^{1/(1-q)}b^{-\gamma}}{C_0}u_0(x)\geq u_0(x),
\end{split}
\end{equation*}
provided we take $T>0$ sufficiently large such that
$$
T^{1/(1-q)}b^{-\gamma}\geq C_0.
$$
Thus, for $T$ sufficiently large, we deduce from the comparison principle that
$$
u(t,x)\leq W(t,x), \qquad (t,x)\in(0,T)\times\real^N,
$$
and it is immediate to conclude that this implies extinction in finite time for $u$, with an extinction time $T_e\leq T$.

\medskip

\noindent \textbf{Non-extinction with slower tail.} Let us now consider a solution $u$ to the Cauchy problem \eqref{eq1}-\eqref{IC} with an initial condition $u_0$ satisfying \eqref{tail.slow}. Then the non-extinction in finite time and the positivity for any $t>0$ (that is, $\mathcal{P}(t)=\real^N$ for any $t>0$) follow from the same proof as in \cite[Section 6]{ILS17}, which applies identically also for the range $q\in[p-1,p/2)$. Thus, optimality of the tail in \eqref{tail.optimal} is proved.
\end{proof}

\section{Optimal extinction rates}\label{sec.rate}

This section is devoted to the proof of Theorem~\ref{th.rate}. We thus assume from now on that the exponents $p$ and $q$ satisfy \eqref{exp} as well as $q>p-1$. Assume also that $u_0$ satisfies \eqref{regIC} and \eqref{tail.fast1} for some constant $K_0>0$. Throughout this section, $C$ and $C_i$, $i\ge 1$, denote positive constants depending only on $N$, $p$, $q$, and $u_0$. Dependence upon additional parameters shall be indicated explicitly.

We begin with the proof of the lower bound, which relies on the derivation of a functional inequality for the $L^\infty$-norm of $u$. Exploiting this functional inequality requires the following preliminary result.

\begin{lemma}\label{lem.fi}
Let $T>0$ and a function $h:[0,T]\to [0,\infty)$ such that
\begin{equation}
\mu(t) := \inf_{s\in [0,t]}\{ h(s) \} > 0, \qquad t\in (0,T), \qquad h(T)=0\ , \label{PhZ5}
\end{equation}
and
\begin{equation}
\delta (t-s) h(t)^m \le h(s)\ , \qquad 0<s<t<T\ , \label{PhZ6}
\end{equation}
for some $m\in (0,1)$ and $\delta>0$. Then
\begin{equation}
h(t) \ge \left( \frac{\delta^{1-m}}{2} \right)^{1/(1-m)^2} (T-t)^{1/(1-m)}, \qquad t\in [0,T]. \label{PhZ7}
\end{equation}
\end{lemma}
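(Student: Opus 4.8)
The plan is to turn the multiplicative inequality \eqref{PhZ6} into a differential (or discrete-recursive) inequality for the quantity $\mu(t)$ defined in \eqref{PhZ5}, and then integrate. First I would observe that $\mu$ is non-increasing, positive on $(0,T)$, and that $\mu(T)=0$ since $h(T)=0$. The key point is to relate $h(t)$ and $\mu(t)$: taking the infimum over $s\in[0,t]$ in \eqref{PhZ6} at a fixed later time does not directly help, so instead I would fix $t\in(0,T)$ and let $s$ range, using \eqref{PhZ6} with the roles set so that $h(s)\ge \delta(s-\sigma)h(s)^m \cdot (\text{something})$… more precisely, for $0<\sigma<s<T$ we have $h(s)\ge\delta(s-\sigma)h(s)^m$, hence $h(s)^{1-m}\ge\delta(s-\sigma)$ whenever $h(s)>0$. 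This already gives a pointwise lower bound on $h(s)$, but the cleaner route is to bound $\mu$.

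The main step is as follows. Fix $0<s<t<T$. For every $r\in[0,s]$, inequality \eqref{PhZ6} applied with the pair $(r,t)$ gives $\delta(t-r)h(t)^m\le h(r)$, so taking the infimum over $r\in[0,s]$ yields $\delta(t-s)h(t)^m\le\mu(s)$ (using $t-r\ge t-s$ for $r\le s$). Now let $s\uparrow t$: by continuity of the left side in the endpoint $s$ — or simply by choosing $s$ close to $t$ — we get $\mu(t)\le$ (a bound involving $h(t)$) is the wrong direction, so instead I would iterate the inequality $\delta(t-s)h(t)^m\le\mu(s)\le h(s)$ and combine it with $\mu(t)\le h(t)$. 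The efficient formulation: set $g(t):=\mu(t)^{1-m}$. From $\delta(t-s)\mu(t)^m\le\mu(s)$ (valid for $s<t$ since $\mu(t)\le h(t)$ forces $\delta(t-s)\mu(t)^m\le\delta(t-s)h(t)^m\le\mu(s)$), divide by $\mu(t)^m$… this gives $\delta(t-s)\le\mu(s)\mu(t)^{-m}$, which I would rearrange and integrate in $s$ down from $t$ to a reference point, or use the telescoping trick: pick a dyadic-type partition $t=t_0<t_1<\cdots$ approaching $T$ with $t_{k+1}-t_k$ chosen so that $\mu(t_{k+1})=\mu(t_k)/2$ is impossible to control directly, so the honest route is the ODE one.

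Concretely, I expect the clean argument to be: from $\delta(t-s)h(t)^m\le h(s)$, fix $t$ and integrate over $s\in(0,t)$, or better, note $h(s)\ge\delta(s/2)h(s)^m$ for $s$ small gives nothing uniform — so the right move is a \emph{Gronwall-type} iteration. Define $\phi(t)=(T-t)$. We want $h\ge c\,\phi^{1/(1-m)}$. Suppose not at some point; use \eqref{PhZ6} with $s$ chosen as the largest point below $t$ where $h$ equals a putative barrier, deriving a contradiction with \eqref{PhZ6} via the computed exponent $1/(1-m)^2$ in the constant. I anticipate the \textbf{main obstacle} is precisely bookkeeping this constant: one must check that if $h(t_0)<\left(\delta^{1-m}/2\right)^{1/(1-m)^2}(T-t_0)^{1/(1-m)}$ for some $t_0$, then propagating \eqref{PhZ6} forward (from $t_0$ toward $T$, where $h(T)=0$) forces a violation, and matching the two exponents $1/(1-m)$ and $1/(1-m)^2$ correctly is the delicate arithmetic — everything else (monotonicity of $\mu$, the reduction $h(s)^{1-m}\ge\delta(s-\sigma)$) is routine. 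A second, equivalent and perhaps slicker route: apply \eqref{PhZ6} with $s=(t+T)/2$-type midpoints iteratively to get $h(t)\ge\delta(T-t)h(\tfrac{t+T}{2})^m/2^{?}$, unfold the recursion $h(t)\ge\prod(\dots)$, and sum the resulting geometric series of exponents $\sum m^k=1/(1-m)$ in the power of $(T-t)$ and $\sum k m^k$ in the power of $\delta$, landing exactly on \eqref{PhZ7}; here the obstacle is justifying that the infinite product converges to a positive constant and that $h(t_k)^{m^k}\to1$ as $t_k\uparrow T$, which uses $h(T)=0$ together with $\mu>0$ to keep $h(t_k)$ bounded and bounded below by earlier iterates.
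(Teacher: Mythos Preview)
Your proposal wanders through several dead ends (the $\mu$-ODE idea, the contradiction scheme) before arriving, in the last paragraph, at essentially the paper's argument: iterate \eqref{PhZ6} along a dyadic sequence $t=t_0<t_1<\cdots$ approaching the right endpoint, unfold the recursion, and sum the geometric series of exponents. That is the right idea, and it is exactly what the paper does.

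There is, however, one genuine gap in your version. If you iterate directly to $T$, as you propose, the tail factor in the product is $h(t_k)^{m^k}$ with $t_k\uparrow T$. Since $h(T)=0$, the values $h(t_k)$ may approach $0$ while $m^k\to 0$, so $h(t_k)^{m^k}$ is an indeterminate $0^0$-type limit; your suggested fix (``$\mu>0$ keeps $h(t_k)$ bounded below'') does not work, because $\mu(t_k)$ itself may vanish as $t_k\uparrow T$ --- the hypothesis \eqref{PhZ5} only guarantees $\mu(\tau)>0$ for $\tau$ \emph{strictly} less than $T$. The paper resolves this by a two-step limit: first fix an auxiliary $\tau\in(t,T)$ and take $t_i=t/2^i+(1-1/2^i)\tau\to\tau$; then $h(t_i)\ge\mu(\tau)>0$ uniformly in $i$, so $h(t_i)^{m^i}\to 1$ cleanly and the induction yields
\[
h(t)\ \ge\ \frac{\bigl(\delta(\tau-t)\bigr)^{1/(1-m)}}{2^{1/(1-m)^2}}\,.
\]
Only afterwards does one let $\tau\to T$. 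A minor bookkeeping point: the exponent $\sum_{k\ge 0}(k+1)m^k=1/(1-m)^2$ sits on the factor $2$, not on $\delta$; both $\delta$ and $(T-t)$ carry the exponent $\sum_{k\ge 0} m^k=1/(1-m)$.
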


\begin{proof}
Fix $t\in (0,T)$ and $\tau\in (t,T)$. Introducing the sequence $(t_i)_{i\ge 0}$ defined by
$$
t_i := \frac{t}{2^i} + \left( 1 - \frac{1}{2^i} \right) \tau, \qquad i\ge 0,
$$
we observe that
\begin{equation}
t=t_0 < t_i < t_{i+1} < \tau, \qquad i\ge 1\ , \qquad \lim\limits_{i\to\infty} t_i = \tau. \label{PhZ10}
\end{equation}
Since $t_{i+1}-t_i = (\tau-t)/2^{i+1}$ for $i\ge 0$, we infer from \eqref{PhZ6} that
\begin{equation}
\frac{\delta(\tau-t)}{2^{i+1}} h(t_{i+1})^m \le h(t_i), \qquad i\ge 0. \label{PhZ8}
\end{equation}
By an induction argument, we deduce from \eqref{PhZ8} that
\begin{equation}
h(t)=h(t_0) \ge \frac{(\delta(\tau-t))^{\Sigma_i}}{2^{\sigma_i}} h(t_i)^{m^i}, \qquad i\ge 1, \label{PhZ9}
\end{equation}
where
$$
\Sigma_i := \sum_{j=0}^{i-1} m^j = \frac{1-m^i}{1-m} \ \text{ and }\ \sigma_i := \sum_{j=0}^{i-1} (j+1)m^j = \frac{1-(i+1)m^i + i m^{i+1}}{(1-m)^2}
$$
for $i\ge 1$. We then infer from \eqref{PhZ5}, \eqref{PhZ10}, and \eqref{PhZ9} that
$$
h(t) \ge \frac{(\delta(\tau-t))^{\Sigma_i}}{2^{\sigma_i}} \mu(\tau)^{m^i}, \qquad i\ge 1.
$$
Owing to the positivity of $\mu(\tau)$, we may pass to the limit as $i\to\infty$ in the previous inequality to obtain
$$
h(t) \ge \left( \frac{(\delta(\tau-t))^{1-m}}{2} \right)^{1/(1-m)^2}.
$$
We then let $\tau\to T$ in the previous inequality to complete the proof.
\end{proof}

\medskip

\begin{proof}[Proof of Theorem~\ref{th.rate}: $L^\infty$-lower bound.]
By \cite[Lemma~5.1]{IL12}, there exists $C_1>0$ such that
\begin{equation}\label{L1Linfty}
\|u(t)\|_1\leq C_1\|u(t)\|_{\infty}^{\nu},
\end{equation}
with
$$
\nu:=\frac{(N+1)(q_*-q)}{p-q}, \quad q_*:=p-\frac{N}{N+1}.
$$
Also by \cite[Theorem~1.7]{IL12} we have the gradient estimate
\begin{equation}\label{grad.est1}
\|\nabla u(t)\|_{\infty}\leq C_2\|u(s)\|_{\infty}^{1/q}(t-s)^{-1/q}, \qquad {\rm for \ any} \ 0<s<t.
\end{equation}
Let $t>0$ and $s\in(0,t)$. We infer from the Gagliardo-Nirenberg inequality and the estimates \eqref{L1Linfty} and \eqref{grad.est1} that
\begin{equation*}
\begin{split}
\|u(t)\|_{\infty}&\leq C\|\nabla u(t)\|_{\infty}^{N/(N+1)}\|u(t)\|_1^{1/(N+1)}\\
&\leq C C_1^{1/(N+1)}C_2^{N/(N+1)} \|u(s)\|_{\infty}^{N/q(N+1)}\|u(t)\|_{\infty}^{\nu/(N+1)}(t-s)^{-N/q(N+1)},
\end{split}
\end{equation*}
from which, taking into account that
$$
1-\frac{\nu}{N+1}=1-\frac{q_*-q}{p-q}=\frac{N}{(N+1)(p-q)},
$$
we derive that
\begin{equation}\label{interm1}
(t-s)\|u(t)\|_{\infty}^{q/(p-q)}\leq C_3\|u(s)\|_{\infty}.
\end{equation}
Let $T_e$ be the extinction time of $u$. Since $q<p-q$, it follows from the properties of $u$ prior to the extinction time that we are in a position to apply Lemma~\ref{lem.fi} (with $h=\|u\|_\infty$, $T=T_e$, and $m=q/(p-q)<1$) and obtain the claimed lower bound.
\end{proof}

\medskip

\begin{proof}[Proof of Theorem~\ref{th.rate}: upper bounds.] We start again from results contained in \cite{IL12}. More precisely, it follows from \cite[Eq.~(5.5)]{IL12} and \eqref{tail.fast1} that
\begin{equation}\label{tail.fast2}
0\leq u(t,x)\leq C_4 |x|^{-(p-q)/(q-p+1)}, \qquad (t,x)\in(0,\infty)\times\real^N.
\end{equation}
Moreover, we have the following gradient estimate \cite[Theorem~1.3(iii)]{IL12}
$$
\left|\nabla u^{-(q-p+1)/(p-q)}(t,x)\right|\leq C\left[1+\|u_0\|_{\infty}^{(p-2q)/p(p-q)}t^{-1/p}\right]
$$
for $(t,x)\in[0,T_e)\times\real^N$. Restricting ourselves to $t\in(T_e/2,T_e)$, the right hand side of the previous inequality is bounded and we further obtain
\begin{equation}\label{grad.est2}
|\nabla u(t,x)|\leq C_5u(t,x)^{1/(p-q)}, \qquad (t,x)\in(T_e/2,T_e)\times\real^N.
\end{equation}
Let $t\in (T_e/2,T_e)$. Integrating \eqref{eq1} over $(t,T_e)\times \real^N$ and using \eqref{grad.est2} as well as the property $\|u(T_e)\|_1=0$, we find
\begin{equation}\label{interm4}
\|u(t)\|_1=\int_t^{T_e}\int_{\real^N}|\nabla u(s,x)|^q\,dx\,ds\leq C_5^q\int_t^{T_e}\int_{\real^N}|u(s,x)|^{q/(p-q)}\,dx\,ds.
\end{equation}
Since $p>2q$, we have $q/(p-q)\in(0,1)$ and we infer from \eqref{tail.fast2} and H\"older's inequality that, for any $R\in(0,\infty)$ and $s\in (t,T_e)$,
\begin{equation*}
\begin{split}
\int_{\real^N}|u(s,x)|^{q/(p-q)}\,dx&=\int_{B_R(0)}|u(s,x)|^{q/(p-q)}\,dx+\int_{\real^N\setminus B_R(0)}|u(s,x)|^{q/(p-q)}\,dx\\
&\leq\left[\int_{B_R(0)}u(s,x)\,dx\right]^{q/(p-q)}\left(\int_{B_R(0)}dx\right)^{(p-2q)/(p-q)}\\
&\quad +C\int_{R}^{\infty}r^{N-1-q/(q-p+1)}\,dr\\
&\leq C\left[\|u(s)\|_{1}^{q/(p-q)}R^{N(p-2q)/(p-q)}+R^{N-q/(q-p+1)}\right],
\end{split}
\end{equation*}
where, in order to derive the last inequality, we took into account that, since $p-1<q<p/2$ and $p>p_c$,
\begin{align*}
N-\frac{q}{q-p+1} & =\frac{(N-1)q-N(p-1)}{q-p+1}<\frac{(N-1)p-2N(p-1)}{2(q-p+1)}\\
& =\frac{(N+1)(p_c-p)}{2(q-p+1)}<0.
\end{align*}
We next optimize in $R$ with the choice
$$
\|u(s)\|_1^{q/(p-q)}R^{N-Nq/(p-q)}=R^{N-q/(q-p+1)},
$$
or equivalently
$$
R=\|u(s)\|_1^{-(q-p+1)/[(N+1)(p-q)-N]}.
$$
Substituting this choice of $R$ in the previous inequality leads us to
\begin{equation}\label{interm5}
\int_{\real^N}|u(s,x)|^{q/(p-q)}\,dx\leq C\|u(s)\|_1^{\omega},
\end{equation}
with
$$
\omega:=\frac{q}{p-q}-\frac{N(p-2q)(q-p+1)}{(p-q)[(N+1)(p-q)-N]}.
$$
We observe after straightforward calculations that, since $p-1<q<p/2<q_*=p-N/(N+1)$, there holds
$$
1-\omega=\frac{p-2q}{(N+1)(p-q)-N}>0.
$$
Now, combining \eqref{interm4} and \eqref{interm5} gives
\begin{equation}\label{interm6}
\|u(t)\|_1\leq C_6\int_t^{T_e}\|u(s)\|_1^{\omega}\,ds, \qquad t\in (T_e/2,T_e).
\end{equation}
It readily follows from \eqref{eq1} and the non-negativity of $u$ that $s\mapsto \|u(s)\|_1$ is non-increasing and we infer from \eqref{interm6} that
$$
\|u(t)\|_1\leq C_6 (T_e-t) \|u(t)\|_1^{\omega}, \qquad t\in (T_e/2,T_e).
$$
Therefore, since $\|u(t)\|_1\ne 0$ for $t\in (T_e/2,T_e)$,
\begin{equation}\label{interm7}
\|u(t)\|_1\leq C_7(T_e-t)^{[(N+1)(p-q)-N]/(p-2q)}, \qquad t\in(T_e/2,T_e),
\end{equation}
and we have established the upper bound in \eqref{rate.optimal.L1}. It next follows from \eqref{grad.est2} and the Gagliardo-Nirenberg inequality that, for $t\in(T_e/2,T_e)$,
\begin{align*}
\|u(t)\|_{\infty} & \le C\|\nabla u(t)\|_{\infty}^{N/(N+1)}\|u(t)\|_1^{1/(N+1)} \\
& \leq C\|u(t)\|_{\infty}^{N/(N+1)(p-q)}\|u(t)\|_1^{1/(N+1)},
\end{align*}
hence
\begin{equation}\label{interm8}
\|u(t)\|_{\infty}\leq C\|u(t)\|_1^{(p-q)/[(N+1)(p-q)-N]}.
\end{equation}
Gathering \eqref{interm7} and \eqref{interm8}, we readily obtain the upper bound in \eqref{rate.optimal}, as desired.
\end{proof}

\begin{proof}[Proof of Theorem~\ref{th.rate}: $L^1$-lower bound.]
We are left with proving the $L^1$-lower bound for $t\in (T_e/2,T_e)$. To this end, we use once more the Gagliardo-Nirenberg inequality along with \eqref{grad.est2} to obtain,
$$
\|u(t)\|_\infty \le C \|\nabla u(t)\|_\infty^{N/(N+1)} \|u(t)\|_1^{1/(N+1)} \le C \|u(t)\|_\infty^{N/(p-q)(N+1)} \|u(t)\|_1^{1/(N+1)}.
$$
Since $\|u(t)\|_\infty\ne 0$, we further obtain
$$
\|u(t)\|_\infty^{[(N+1)(p-q)-N]/(p-q)} \le \|u(t)\|_1,
$$
from which the lower bound in \eqref{rate.optimal.L1} readily follows with the help of the lower bound in \eqref{rate.optimal}.
\end{proof}

\section*{Acknowledgements} The first author is partially supported by
the ERC Starting Grant GEOFLUIDS 633152. Part of this
work was done while the first author enjoyed the hospitality and
support of the Institute de Math\'ematiques de Toulouse, Toulouse,
France.

\bibliography{OptExtRatesPLEAbs}{}
\bibliographystyle{siam}

\end{document}